\title[On the internal characterization of injective algebras]{On the internal characterization of injective algebras}
\author{Pavlo Dzikovskyi}
\address{P.Dzikovskyi: graduated from Ivan Franko National University of Lviv, Ukraine}
\email{p.dzikovsky@gmail.com}
\subjclass{08A05}
\keywords{injective object}
\newtheorem{proposition}{Proposition}
\newtheorem{lemma}{Lemma}
\newtheorem{remark}{Remark}
\theoremstyle{definition}
\newtheorem{definition}{Definition}
\begin{document}

\begin{abstract} It is shown that universal algebras that are injective in their equational classes are characterized by internal property that can be called completeness. We define universal algebra $A$ as complete (closed to simple extensions) if for each its subalgebra $A'$ and each set of extension conditions for this subalgebra there is $a \in A$ that satisfies these conditions. We define a set of extension conditions for $A'$ to some extension $A''$ as the difference between factorization kernels of free algebras for $A''$ and $A'$. It's proved that each injective universal algebra is complete and each complete universal algebra belonging to the class of algebras with CEP is injective. It's checked directly that complete (in the sense of ordering) boolean algebras and divisible Abelian groups are complete in the sense defined here.
\end{abstract}

\maketitle

\section*{Introduction}

A lot of works is devoted to the injectivity and related topics in universal algebra. In particular essential results in this field (especially in non-categorical aspects of the theory to which we pay special attention) was described in [1], [2], [3]. But perhaps a common internal characterisation of injective algebras has not been made yet. Here we are trying to answer this question.
\\

We use the next notation, common definitions and propositions.
\\

If $\mathbf{M}$ is a variety (equational class) of universal algebras then we denote free in $\mathbf{M}$ algebra of (classes of) words over $S$ as $W(S,\mathbf{M})$ (or just $W(S)$ if it looks unambiguous). Each element of $W(S,\mathbf{M})$ is a factor-class of some word $w$ from the corresponding absolutely free algebra of words $W(S,\Omega)$. We denote it as $[w]$. 
\\

If $\mathbf{M}$ is a variety of universal algebras then any $A \in \mathbf{M}$ is an homomorphic image of some free algebra $F \in \mathbf{M}$. Next consideration often requires to select some corresponding preimage and in each such case we use $W(A,\mathbf{M})$ for the purpose. It does not limit the generality of this consideration because we interprete elements of $A$ in $W(A,\mathbf{M})$ as "pure" sets without operations that are defined on them in $A(\mathbf{M})$.
\\

If homomorphisms $f: A \rightarrow B$ and $g: A \rightarrow B$ coincide on the set of generators $A_0 \subset A$ then they coincide on the whole $A$.
\\

"Let $\mathbf{M}$ be an equational class. An algebra $Q \in \mathbf{M}$ is called injective if for every $B \in \mathbf{M}$, every subalgebra $A$ of $B$ (written $A \le B$) and every $f: A \rightarrow Q$, there exists (a homomorphism) $g: B \rightarrow Q$ extending $A$ (i.e., $g|_A = f$)" [2].
\\

"A class $K$ of algebras is said to satisfy the congruence extension property (CEP) if for all $B \in K$ and all subalgebras $A \le B \in K$, every congruence on $A$ is the restriction of some congruence on $B$" [4].
\\

\section*{Simple extensions of universal algebras}

This section is related to arbitrary variety of universal algebras. We believe that some variety $\mathbf{M}$ is selected and all mentioned algebras belong to this class. So we omit it in the notation below.

\begin{definition}
If $A_0$ is a set of generators of $A$ then we denote the surjective homomorphism (that extends identity map on $A_0$) from $W(A_0)$ on $A$ as $Val_{A_0}$. 
\end{definition}

\begin{definition}
We denote the kernel congruence of $Val_A$ as $Ker_A$. So $([w],[w']) \in Ker_A$ iff $[w]Val_A = [w']Val_A$.
\end{definition}

\begin{definition}
Let $f: S \rightarrow Sf$ is a map. Then we denote the extension of $f$ to surjective homomorphism from $W(S)$ on $W(Sf)$ as $Term_f$.
\end{definition}

\begin{lemma}
Let $h: W(S) \rightarrow A$ is a surjection. Then for each $[w] \in W(S)$: $[[w]Term_{h|_S}]Val_{Sh} = [w]h$, i.e. $Term_{h|_S} \circ Val_{Sh} = h$.
\end{lemma}

\[\bfig
\node ws(250,500)[W(S)]
\node wsh(1000,500)[W(Sh)]
\node a(1000,0)[A]

\arrow[ws`wsh;\,Term_{h|_S}\,]
\arrow|m|[ws`a;\,h\,]
\arrow|m|[wsh`a;\,Val_{Sh}\,]
\efig\]

\begin{proof} 
For each $s \in S$: $[[s]Term_{h|_S}]Val_{Sh} =$ (because $Term_{h|_S}$ coincides with $h$ on $S$) $[[s]h]Val_{Sh} =$ (because $Val_{Sh}$ coincides with identity map on $Sh$) $[s]h$. Thus homomorphisms $Term_{h|_S}\,\circ\,Val_{Sh}$ and $h$ coincide on the set of generators of $W(S)$ and, so, on the whole algebra.
\\
\end{proof} 

\begin{definition}
We call the universal algebra $A'$ a simple extension of $A$ with element $a$ (it's possible that $a \in A$) if $A \le A'$ and $A \cup \{a\}$ is a set of generators of $A'$. We denote it as $A \sqcup a$. 
\end{definition}

\begin{lemma} 
Let $g: A \rightarrow B$ is a surjective homomorphism, $a \notin A$ and $A \sqcup a, B \sqcup b$ are defined. If $g \cup (a,b)$ can be extended to (surjective) homomorphism $g_{\sqcup}: (A \sqcup a) \rightarrow (B \sqcup b)$ then $Term_{g \cup (a,b)} = Term_{h|_{A \cup \{a\}}}$ where $h = Val_{A \sqcup a}\,\circ\,g_\sqcup$.
\end{lemma} 

\[\bfig
\node wAa(0,500)[W(A \cup \{a\})]
\node wBb(2000,500)[W(B \cup \{b\})]
\node Aa(0,0)[A \sqcup a]
\node Bb(2000,0)[B \sqcup b]
\arrow[wAa`wBb;Term_{g \cup (a,b)} = Term_{h|_{A \cup \{a\}}}]
\arrow|m|[wAa`Bb;\,h\,]
\arrow|m|[wAa`Aa;\,Val_{A \sqcup a}\,]
\arrow|m|[wBb`Bb;\,Val_{B \sqcup b}\,]
\arrow[Aa`Bb;\,g_\sqcup]
\efig\]

\begin{proof} 
$h|_{A \cup \{a\}} = g \cup (a,b)$ then both $Term_{g \cup (a,b)}$ and $Term_{h|_{A \cup \{a\}}}$ extend this map on $W(A \cup \{a\}) \rightarrow W(B \cup \{b\})$ and, so, they are equal.
\\ 
\end{proof} 

\begin{definition}
For each $A \sqcup a$ we call $Ker_{A \sqcup a} \setminus Ker_A$ a set of $a$-extension conditions for $A$. We denote it as $Ker_{A \sqcup a}|_a$.
\end{definition}

\begin{proposition} 
Let $g: A \rightarrow B$ is a surjective homomorphism, $a \notin A$ and $A \sqcup a, B \sqcup b$ are defined. We denote $Term_{g \cup (a,b)}$ as $T$. Then $g \cup (a,b)$ can be extended to homomorphism $g_\sqcup: (A \sqcup a) \rightarrow (B \sqcup b)$ iff for each $([w],[w']) \in Ker_{A \sqcup a}|_a$: $([wT],[w'T]) \in Ker_{B \sqcup b}$ (this proposition looks equal to Theorem 2, §12, [5], which is formulated in terms of polynomial symbols).
\end{proposition} 

\[\bfig
\node wAa(0,450)[W(A \cup \{a\})]
\node wBb(1500,450)[W(B \cup \{b\})]
\node Aa(0,0)[A \sqcup a]
\node Bb(1500,0)[B \sqcup b]
\arrow[wAa`wBb;T = Term_{h|_{A \cup \{a\}}}]
\arrow|m|[wAa`Bb;\,h = h_\sqcup\,]
\arrow|m|[wAa`Aa;\,Val_{A \sqcup a} = f_\sqcup \,]
\arrow|m|[wBb`Bb;\,Val_{B \sqcup b}\,]
\arrow[Aa`Bb;\,g_\sqcup]
\efig\]

\begin{proof} 
Necessity. We denote $h = Val_{A \sqcup a}\,\circ\,g_\sqcup$, $Term_{h|_{A \cup \{a\}}} = T$ due to Lemma 2. Let $([w],[w']) \in Ker_{A \sqcup a}|_a$. It follows $([w],[w']) \in Ker_{A \sqcup a}$ that is $[w]Val_{A \sqcup a} = [w']Val_{A \sqcup a}$. So $[wT]Val_{B \sqcup b} = [w]Term_{h|_{A \cup \{a\}}}Val_{B \sqcup b} = $ (due to Lemma 1) $[w]h = ([w]Val_{A \sqcup a})\,g_\sqcup =$ (as mentioned above) $([w']Val_{A \sqcup a})\,g_\sqcup =$ (passing the same steps for $w'$) $[w'T]Val_{B \sqcup b}$ which had to be proved. 
\\

To prove the sufficiency we compare the kernels of homomorphisms $f_\sqcup = Val_{A \sqcup a}$ and $h_\sqcup = T \circ Val_{B \sqcup b}$. Due to Lemma 2: $T|_A = Term_{h_B|_A}$ where $h_B = Val_A\,\circ\,g$. 

\[\bfig
\node wA(0,450)[W(A)]
\node wB(1200,450)[W(B)]
\node A(0,0)[A]
\node B(1200,0)[B]
\arrow[wA`wB;T|_A = Term_{h_B|_A}]
\arrow|m|[wA`B;\,h_B\,]
\arrow|m|[wA`A;\,Val_A\,]
\arrow|m|[wB`B;\,Val_B\,]
\arrow[A`B;\,g]
\efig\]

If $([w],[w']) \in Ker_A$ then $[wT]Val_{B \sqcup b} = [wTerm_{h_B|_A}]Val_B =$ (due to Lemma 1) $[w]h_B = [w]Val_Ag = [w']Val_Ag = $ (passing the same steps for $w'$) $[w'T]Val_{B \sqcup b}$.

And for each $([w],[w']) \in Ker_{A \sqcup a}|_a$: $[wT]Val_{B \sqcup b} = [w'T]Val_{B \sqcup b}$ by the sufficiency condition. 

So for each $([w],[w']) \in Ker_{A \sqcup a}$: $[wT]Val_{B \sqcup b} = [w'T]Val_{B \sqcup b}$ that is $Ker(f_\sqcup) \subset Ker(h_\sqcup)$ and due to the 3rd isomorphism theorem there is homomorphism $g^*: W(A \cup \{a\})f_\sqcup \rightarrow W(A \cup \{a\})h_\sqcup$ such that $f_\sqcup \circ g^* = h_\sqcup$. 

For each $a_0 \in A: a_0g^* = a_0h_\sqcup = a_0g$ and $ag^* = ah_\sqcup = b$. $g^*$ coincides with required $g_\sqcup$ on $A \cup \{a\}$, hence, on the whole $A \sqcup a$. 
\\
\end{proof} 

\begin{proposition} \textbf{["From simple-extension-injectivity follows completeness"]}
Let $B$ is subalgebra of $B_1$. If for any surjective homomorphism $g: A \rightarrow B$ and each simple extension $A \sqcup a$ the homomorphism $g_\sqcup: A \sqcup a \rightarrow B_1$ which extends $g$ can be defined then for each set of extension conditions $Ker_{B \sqcup x}|_x$ there is $b \in B_1$ which satisfies these conditions (that is if $([w(x)],[w'(x)]) \in Ker_{B \sqcup x}|_x$ then $w(b) = w'(b)$).
\end{proposition}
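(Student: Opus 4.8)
The plan is to reduce everything to Proposition 1. First I would observe that a homomorphism $g_{\sqcup}: A \sqcup a \to B_1$ continuing $g$ is completely determined, once its values on $A_0$ are fixed to be $g$, by the single element $b := a\,g_{\sqcup} \in B_1$, and that its image lies in the subalgebra of $B_1$ generated by $A_0 g \cup \{b\}$, which is exactly a simple extension $B \sqcup b$ of $B$ sitting inside $B_1$. Hence $g_{\sqcup}$ can be defined \emph{iff} there is $b \in B_1$ for which $g|_{A_0} \cup (a,b)$ continues to a homomorphism $A \sqcup a \to B \sqcup b$. By Proposition 1 this happens iff for every $([w],[w']) \in Ker(A \sqcup a)|_a$ one has $([wT],[w'T]) \in Ker(B \sqcup b)$, where $T$ continues $g|_{A_0} \cup (a,b)$; and $([wT],[w'T]) \in Ker(B \sqcup b)$ says precisely that $w(b) = w'(b)$ in $B_1$. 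So both sides of the asserted equivalence become statements about finding $b \in B_1$ satisfying a family of term equations, and the task reduces to matching these families with sets of extension conditions $Ker(B \sqcup x)|_x$.

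For necessity I would specialize the hypothesis. Given an arbitrary simple extension $B \sqcup x$ with its conditions $Ker(B \sqcup x)|_x$, I take $A = B$, $g = \mathrm{id}_B$ (which is surjective) and $a = x$, so that $A \sqcup a = B \sqcup x$. The assumed extendability then yields a homomorphism $g_{\sqcup}: B \sqcup x \to B_1$ continuing $\mathrm{id}_B$, and I set $b := x\,g_{\sqcup}$. For any $([w(x)],[w'(x)]) \in Ker(B \sqcup x)|_x$ the two terms are equal in $B \sqcup x$, so applying $g_{\sqcup}$ (which fixes $B$ and sends $x \mapsto b$) gives $w(b) = w'(b)$ in $B_1$. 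Thus $b$ realizes these conditions.

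For sufficiency I would push the extension $A \sqcup a$ forward to $B$. Let $T$ continue $g|_{A_0} \cup (a,x)$ to the free algebra $W(A_0 g \cup \{x\}, \mathbf{M})$ with $x$ a fresh generator, and let $\theta$ be the congruence on $W(A_0 g \cup \{x\}, \mathbf{M})$ generated by the image $\{([wT],[w'T]) : ([w],[w']) \in Ker(A \sqcup a)\}$; the quotient $B \sqcup x := W(A_0 g \cup \{x\}, \mathbf{M})/\theta$ is a candidate simple extension of $B$ whose kernel contains this image. The hypothesis then supplies $b \in B_1$ satisfying $Ker(B \sqcup x)|_x$. For each $([w],[w']) \in Ker(A \sqcup a)|_a$ the pair $([wT],[w'T])$ lies in $\theta \subseteq Ker(B \sqcup x)$, hence either in $Ker(B)$, where $w(b) = w'(b)$ already holds in $B \subseteq B_1$, or in $Ker(B \sqcup x)|_x$, where $w(b) = w'(b)$ because $b$ realizes the conditions. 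In both cases $([wT],[w'T]) \in Ker(B \sqcup b)$, and Proposition 1, with $T$ reread as continuing $g|_{A_0} \cup (a,b)$, delivers the desired $g_{\sqcup}$.

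The hard part will be the sufficiency step's tacit claim that $\theta$ really presents a simple extension of $B$, i.e. that $B$ embeds as a subalgebra of $W(A_0 g \cup \{x\}, \mathbf{M})/\theta$, equivalently that $\theta \cap \bigl(W(A_0 g, \mathbf{M})\bigr)^2 = Ker(B)$. The inclusion $Ker(B) \subseteq \theta$ is the easy half and follows from surjectivity of $g$, which forces the image of $Ker(A)$ to be $Ker(B)$; the reverse inclusion is the delicate point, since closing the generating pairs under the operations and under transitivity could in principle identify distinct elements of $B$ through chains routed via $x$. I would therefore have to argue that the newly imposed relations genuinely involve the fresh generator $x$ and so cannot collapse $B$; once this embedding is secured, the equivalence closes exactly as above through Proposition 1.
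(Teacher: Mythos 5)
Your necessity argument is correct and coincides with the paper's (specialize to $g=\mathrm{id}_B$ and $A\sqcup a=B\sqcup x$, then read off $w(b)=w'(b)$). The gap is in the sufficiency half, and it is a real one: you follow the same route as the paper — push $Ker(A\sqcup a)$ forward along $T_x$ and present $B\sqcup x$ as the quotient of $W(B\cup\{x\})$ by the resulting congruence $\theta$ — but you only \emph{announce} that you would have to prove that $\theta$ does not collapse $B$. That step is not a technicality to be secured later; it is where the whole argument lives, and for non-injective surjective $g$ it is false. Take the variety of all algebras with one binary operation, $A=\{p,p',q,r,z\}$ with every product equal to $z$, let $g$ be the surjection onto $B=\{P,q,r,z\}$ identifying $p$ with $p'$ only, and let $A\sqcup a=A\cup\{a\}$ with $a\cdot p=q$, $a\cdot p'=r$ and all other new products equal to $z$. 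Then $Ker(A\sqcup a)|_a$ contains $([a\cdot p],[q])$ and $([a\cdot p'],[r])$, whose $T_x$-images are $([x\cdot P],[q])$ and $([x\cdot P],[r])$; hence $\theta$ identifies $q$ with $r$, the quotient is not a simple extension of $B$, and no $b$ in any $B_1\supseteq B$ can satisfy both pushed-forward conditions. Since a $B_1$ satisfying the right-hand side does exist for this $B$ (glue disjoint copies of representatives of all simple extensions of $B$ along $B$, sending every mixed product to one new absorbing element, so that each simple extension embeds over $B$), the right-hand side can hold while the left-hand side fails: the implication is false as stated, not merely unproved. The same example shows that the paper's $Eq_\sqcup$ (the raw image rather than the generated congruence) is not transitive — it contains $([x\cdot P],[q])$ and $([x\cdot P],[r])$ but not $([q],[r])$ — so the paper's claims that $Eq_\sqcup$ is a congruence and that $B\subset B_\sqcup$ are both unjustified; the worry you flagged is a genuine defect of the paper's own proof.

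Two smaller points. Your assertion that surjectivity of $g$ forces the image of $Ker(A)$ to equal $Ker(B)$ is not right: only the inclusion of the image into $Ker(B)$ is automatic, since a pair in $Ker(B)$ need not lift to a pair in $Ker(A)$; the congruence \emph{generated} by the image does contain $Ker(B)$, because every word $u$ over $B$ is related to the one-letter word $[uV_B]$ via any lift of $u$. Finally, everything you wrote does go through when $g$ is injective, since then $T_x$ is an isomorphism of free algebras carrying $Ker(A\sqcup a)$ to a congruence whose restriction to $W(B)^2$ is exactly $Ker(B)$; a correct version of the proposition must therefore either assume $g$ injective or strengthen the right-hand side.
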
 

\begin{proof} 
Let $g$ is an identity map on $B$. For arbitrary $Ker_{B \sqcup x}|_x$ we extend $g$ to $g_\sqcup: B \sqcup x \rightarrow B_1$ which contains $(x,b)$ for some $b \in B_1$. 

Due to the necessity part of Prop.1: $([w(x)T],[w'(x)T]) \in Ker_{B \sqcup b}$, where $T = Term_{g \cup (x,b)}$. Due to Lemma 2: $T = Term_{h|_{B \cup \{x\}}}$, $h = Val_{B \sqcup x}\,\circ\,g_\sqcup$.

So $w(b) = [w(x)]h =$ (due to Lemma 1) $[w(x)]Term_{h|_{B \cup \{x\}}}Val_{B \sqcup b} = [w(x)T]Val_{B \sqcup b}$ \linebreak $= [w'(x)T]Val_{B \sqcup b} =$ (passing the same steps for $w'$) $= w'(b)$ which had to be proved.
\\
\end{proof} 

\begin{proposition} \textbf{["From completeness and CEP follows simple-extension-injectivity"]}
Let $g: A \rightarrow B$ is surjective homomorphism and $A$ belongs to algebraic class that satisfies CEP. Then if $B$ is subalgebra of $B_1$ and for each set of x-extension conditions for $B$ there is $b \in B_1$ which satisfies these conditions (that is if $([w(x)],[w'(x)]) \in Ker_{B \sqcup x}|_x$ then $w(b) = w'(b)$) then for any simple extension $A \sqcup a$ homomorphism $g_{\sqcup b}: A\sqcup a \rightarrow B_1$ which extends $g$ exists.
\end{proposition} 

\begin{proof} 
For arbitrary $A \sqcup a$ the kernel of $g$ can be extended to some congruence relation $ker_{g_\sqcup}$ in $A \sqcup a$ due to CEP. If $g_\sqcup = A \sqcup a \rightarrow (A \sqcup a) / ker_{g_\sqcup}$ then we can assume $(A \sqcup a)g_\sqcup = B \sqcup x$. $x \in B \sqcup x$ satisfies some $x$-extension conditions that include all $T$-images of $a$-extension conditions in $A \sqcup a$ due to the necessity part of Prop.1. By the condition there is $b \in B_1$ that satisfies these $x$-extension conditions so due to the sufficiency part of Prop.1 the required extension of $g$ exists.
\\
\end{proof} 

\begin{remark} 
Another case of extension of $g: A \rightarrow B$ into complete $B_1 \supset B$ can be easy constructed if $g$ is an injection. Then kernel of $g$ can be extended to the trivial congruence relation on $A_1 \supset A$ and it's not necessary $A$ and $A_1$ to satisfy CEP.
\\
\end{remark} 

\section*{Complete (closed to simple extensions) universal algebras}

\begin{definition}
Universal algebra $B_1 \in \mathbf{M}$ is called complete (closed to simple extensions) if for each its subalgebra $B$ and arbitrary set of $x$-extension conditions for $B$ there is $b \in B_1$ which satisfies these conditions (that is if $([w(x)],[w'(x)]) \in Ker_{B \sqcup x}|_x$ then $w(b) = w'(b)$).
\end{definition}

\begin{proposition} 
If universal algebra $B_1$ is injective in $\mathbf{M}$ then it is complete.
\end{proposition} 

\begin{proof} 
Each surjection $g: A \rightarrow B \le B_1$ can be extended to $g_\sqcup: A \sqcup a \rightarrow B_1$ due to the injectivity condition. So required property follows directly from Prop.2.
\\
\end{proof} 

\begin{remark} 
The inversion of Prop.4 is executed in algebraic classes that satisfy CEP due to Prop.3. To construct extension of original homomorphism $A \rightarrow B$ on the whole $A_1 \supset A$ having particular extensions for each $A' \sqcup a'$ in $A_1$ we need to use Zorn's lemma. It's proved in [6], ch.2, th.5.9 (this part of the proof isn't specific for boolean algebras).
\\
\end{remark} 

\begin{remark}
From Prop.4 follows that complete (divisible) Abelian groups and complete boolean algebras are complete in the sense defined above because they are injective in their varieties. But next we check it directly.

Another case of complete algebras we get if $\mathbf{M}$ is a variety of universal algebras such that each $A \in \mathbf{M}$ is free in $\mathbf{M}$ (for example, variety of vector spaces over some field). Then each set of $x$-extension conditions for each $A \in \mathbf{M}$ is empty and $A$ is complete. 
\\
\end{remark}

\begin{proposition} 
Complete Abelian group is complete algebra in the variety of Abelian groups.
\end{proposition}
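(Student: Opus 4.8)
The plan is to apply Definition 6 directly: fix an arbitrary subgroup $A_{sub}$ of the divisible (``complete'') group $A$ together with an arbitrary simple extension $A_{sub} \sqcup x$, and exhibit an element $a \in A$ satisfying $Ker(A_{sub} \sqcup x)|_x$. The whole argument rests on first making the abstract extension conditions concrete for the variety of Abelian groups, where $W(S,\mathbf{M})$ is the free Abelian group on $S$ and every element of $W(A_{sub} \cup \{x\},\mathbf{M})$ is an integer combination $c + m x$ with $c$ in the free Abelian group on $A_{sub}$ and $m \in \mathbb{Z}$.

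First I would describe $Ker(A_{sub} \sqcup x)|_x$ explicitly. A pair $([w(x)],[w'(x)])$ lies in $Ker(A_{sub} \sqcup x)$ iff the value of $w(x) - w'(x)$ in $A_{sub} \sqcup x$ is $0$; writing $w(x) - w'(x) = m x + c$ with $c_0 = V_{A_{sub}}(c) \in A_{sub}$, this reads $m x = -c_0$ in the extension. Such a pair is genuinely new (i.e.\ lies in the set-difference with $Ker(A_{sub})$) exactly when $x$ occurs in the terms, and it constrains the value of $x$ only when $m \neq 0$. So, up to automatically-satisfied conditions, the extension conditions are precisely the relations $m x = c_0$ $(m \neq 0,\ c_0 \in A_{sub})$ that hold in $A_{sub} \sqcup x$.

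Next I would organize these relations. The set $H = \{\, m \in \mathbb{Z} : m x \in A_{sub} \text{ in } A_{sub} \sqcup x \,\}$ is a subgroup of $\mathbb{Z}$. If $H = \{0\}$ there are no constraining conditions and any $a \in A$ works. Otherwise $H = m_0 \mathbb{Z}$ with $m_0 > 0$; put $c^{*} = V_{A_{sub}}(m_0 x) \in A_{sub}$. Every constraining condition then has $m = k m_0$ for some integer $k$, and its right-hand side is forced to equal $k c^{*}$ since $m x = k(m_0 x) = k c^{*}$ already holds inside $A_{sub}$. Now I invoke divisibility of $A$: since $c^{*} \in A_{sub} \subseteq A$, there is $a \in A$ with $m_0 a = c^{*}$. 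For this $a$ and any constraining condition $m x = c_0$ we obtain $m a = k m_0 a = k c^{*} = c_0$, so $w(a) = w'(a)$; the non-constraining conditions ($m = 0$) hold regardless of $a$. Hence this $a$ witnesses completeness.

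The main obstacle I expect is the bookkeeping of the translation in the first step rather than the existence of $a$: one must check that every member of the set-difference $Ker(A_{sub} \sqcup x) \setminus Ker(A_{sub})$ really reduces to a single linear relation $m x = c_0$, that the admissible moduli form exactly the subgroup $H$, and that consistency of the right-hand sides ($c_0 = k c^{*}$) is automatic because these are equalities already living inside the subgroup $A_{sub}$. Once that is in place, the only genuine algebraic input is a single instance of divisibility, namely solving $m_0 a = c^{*}$ in $A$, which is precisely the classical completeness hypothesis.
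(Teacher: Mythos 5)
Your proof is correct and follows the same route as the paper: rewrite each extension condition in additive form as a single linear relation $mx = c_0$ with $c_0 \in A_{sub}$, and solve by divisibility of $A$. The one place you go beyond the paper is worth keeping: the paper only asserts that each individual equality $[x^n]=[a]$ can be satisfied, whereas your observation that the admissible moduli form a subgroup $m_0\mathbb{Z}$ and that every right-hand side is forced to equal the corresponding multiple $k c^{*}$ of $c^{*}=V_{A_{sub}}(m_0 x)$ is exactly what guarantees that a \emph{single} element $a$ with $m_0 a = c^{*}$ satisfies the entire set of extension conditions simultaneously --- a point the paper's two-line argument leaves implicit.
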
 

\begin{proof} 
Any equality in $Ker_{A \sqcup x}|_x$ in Abelian group can be converted to the form $[x^n] = [a]$ for some $a \in A$. By definition Abelian group is complete (divisible) if for any such equality it contains element that satisfies the equality. So it's a complete algebra.
\\
\end{proof} 

\begin{lemma}
In boolean algebra: $(a \wedge b = a \wedge c) \equiv (a \le (b \Delta c)')$ (here $'$ denotes negation).
\end{lemma}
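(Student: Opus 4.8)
The plan is to reduce each side of the claimed equivalence to the single condition that a certain meet equals $0$, and then observe that the two reductions coincide. Throughout I would use the standard facts that $x \le y$ iff $x \wedge y' = 0$, that a join equals $0$ iff each of its joinands equals $0$, that $(y')' = y$, and De Morgan's laws and distributivity of meet over join.

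First I would rewrite the right-hand side. By the order-to-meet identity with $(y')' = y$, the inequality $a \le (b \Delta c)'$ holds iff $a \wedge (b \Delta c) = 0$. Expanding the symmetric difference as $b \Delta c = (b \wedge c') \vee (b' \wedge c)$ and using distributivity of meet over join, this becomes $(a \wedge b \wedge c') \vee (a \wedge b' \wedge c) = 0$, which by the vanishing-join fact is equivalent to the conjunction of $a \wedge b \wedge c' = 0$ and $a \wedge b' \wedge c = 0$.

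Next I would show the left-hand side reduces to the same pair of equations. For the forward direction, from $a \wedge b = a \wedge c$ I obtain $a \wedge b \wedge c' = a \wedge c \wedge c' = 0$ and, symmetrically, $a \wedge b' \wedge c = a \wedge b' \wedge b = 0$. For the converse, writing $a \wedge b = a \wedge b \wedge (c \vee c') = (a \wedge b \wedge c) \vee (a \wedge b \wedge c')$ and discarding the second (zero) term gives $a \wedge b = a \wedge b \wedge c$; the symmetric computation gives $a \wedge c = a \wedge b \wedge c$, whence $a \wedge b = a \wedge c$. Chaining the two reductions yields the stated equivalence.

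A shorter route, which I would record as a remark, exploits the boolean-ring structure in which $\wedge$ is multiplication and $\Delta$ is addition: there $x = y$ iff $x \Delta y = 0$, and $\wedge$ distributes over $\Delta$, so that $(a \wedge b)\,\Delta\,(a \wedge c) = a \wedge (b \Delta c)$. Hence $a \wedge b = a \wedge c$ iff $a \wedge (b \Delta c) = 0$ iff $a \le (b \Delta c)'$ in a single line. There is no genuine obstacle here: the identity is routine once both sides are put in ``meet equals $0$'' form, and the only point demanding care is the De Morgan and distributivity bookkeeping when expanding $b \Delta c$, together with remembering that $(y')' = y$ when passing between $a \le (b \Delta c)'$ and $a \wedge (b \Delta c) = 0$.
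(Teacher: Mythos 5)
Your proof is correct and follows essentially the same route as the paper's: both directions reduce to the pair of identities $a \wedge b \wedge c' = 0$ and $a \wedge b' \wedge c = 0$ via the same distributivity and $x \le y' \Leftrightarrow x \wedge y = 0$ steps. The boolean-ring remark is a pleasant one-line alternative, but the core argument is the one the paper gives.
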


\begin{proof} 
Necessity. $(a \wedge b = a \wedge c) \Rightarrow (a \wedge b \wedge c' = a \wedge c \wedge c' = 0)$. Similarly $a \wedge c \wedge b' = 0$, so $0 = ((a \wedge b \wedge c') \vee (a \wedge c \wedge b')) = a \wedge ((b \wedge c') \vee (c \wedge b')) = a \wedge (b \Delta c)$. Since $a$ is disjunctive to $b \Delta c$, $a \le (b \Delta c)'$.

Sufficiency. $(a \le (b \Delta c)') \equiv (0 = a \wedge (b \Delta c) = a \wedge ((b \wedge c') \vee (c \wedge b')) = (a \wedge b \wedge c') \vee (a \wedge c \wedge b')) = (a \wedge b \wedge c') = (a \wedge c \wedge b')$. Then $a \wedge b = a \wedge b \wedge (c \vee c') = (a \wedge b \wedge c) \vee (a \wedge b \wedge c') = a \wedge b \wedge c$ and, by the same way, $a \wedge c = a \wedge b \wedge c$. 
\\
\end{proof} 

\begin{lemma}
Any set of $x$-extension conditions for boolean algebra $A$ is equal to a set of inequalities $[x] \ge [w_\alpha] \in W(A)$, $[x] \le [w_\beta] \in W(A)$ where $[w_\alpha] \le [w_\beta]$ for each $\alpha, \beta$.
\end{lemma}

\begin{proof} 
Each boolean term is equal to some term in disjunctive normal form. So each term from $x$ can be converted to the form $(x \wedge b) \vee (x' \wedge c)$ and each $x$-extension condition for $A$ can be written as $[(x \wedge k) \vee (x' \wedge l)] = [(x \wedge m) \vee (x' \wedge n)]$, where $k, l, m, n \in A$.

Since boolean algebra $A$ is isomorphic to $[0,x] \times [0,x']$ for any $x \in A$ (here $[\,]$ denotes closed intervals), each its element is unambiguously projected on the pair of components $[0,x], [0,x']$ and each $x$-extension condition for $A$ is equal to the pair of equalities: $[x \wedge k] = [x \wedge m]$ and $[x' \wedge l] = [x' \wedge n]$, where $k, l, m, n \in A$. Due to Lemma 3 they are equal to $[x] \le [(k \Delta m)']$ and $[x'] \le [(l \Delta n)']$ that is $[x] \ge [l \Delta n]$. 

Because all these inequalities corresponds to the same $x \in A \sqcup x$ each lower bound from them is less or equal to arbitrary upper bound.
\\
\end{proof} 

\begin{proposition} 
Complete boolean algebra is complete algebra in the variety of boolean algebras.
\end{proposition}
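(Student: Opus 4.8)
The plan is to reduce every extension condition to a pair of order inequalities that bound $x$ from below and above by elements of $A_{sub}$, and then to produce the required witness as a supremum, using the lattice completeness of $A$.

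First I would put each boolean term in one variable into Boole's normal form: for any term $w(x)$ with parameters in $A_{sub}$ one has $w(x) = (w(1) \wedge x) \vee (w(0) \wedge x')$, where $w(1), w(0) \in A_{sub}$ are obtained by substituting the top and bottom elements for $x$. Hence an arbitrary condition $([w(x)],[w'(x)]) \in Ker(A_{sub} \sqcup x)|_x$ reads $(p_1 \wedge x) \vee (q_1 \wedge x') = (p_2 \wedge x) \vee (q_2 \wedge x')$ with $p_i = w_i(1)$ and $q_i = w_i(0)$ in $A_{sub}$. Meeting both sides with $x$ and then with $x'$ shows that this single equation is equivalent to the conjunction $x \wedge p_1 = x \wedge p_2$ and $x' \wedge q_1 = x' \wedge q_2$ (the converse direction using $y = (y \wedge x) \vee (y \wedge x')$). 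Applying the preceding Lemma to each conjunct, the condition becomes $x \le (p_1 \Delta p_2)'$ together with $x' \le (q_1 \Delta q_2)'$, i.e. $q_1 \Delta q_2 \le x \le (p_1 \Delta p_2)'$. Writing $u = q_1 \Delta q_2$ and $v = (p_1 \Delta p_2)'$, each extension condition is thus a single interval constraint $u \le x \le v$ with $u, v \in A_{sub}$.

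Running over all conditions I obtain a family of lower bounds $\{u_i\}$ and upper bounds $\{v_j\}$ in $A_{sub}$. The key point is that every lower bound lies below every upper bound: the extension $A_{sub} \sqcup x$ is an actual algebra in which its generator $x$ satisfies all of these conditions simultaneously, so $u_i \le x \le v_j$ holds there for all $i,j$; since $u_i, v_j$ already lie in $A_{sub}$, and order relations between elements of $A_{sub}$ agree in $A_{sub}$, in $A$ and in the extension, we conclude $u_i \le v_j$ in $A$. Now set $a = \sup_i u_i$, which exists precisely because $A$ is complete as a lattice. Then $u_i \le a$ for every $i$ by definition of the supremum, while $a \le v_j$ for every $j$ because each $u_i \le v_j$. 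Hence $a$ satisfies every interval constraint, therefore every original extension condition, and so $A$ is complete in the sense of the Definition.

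The main obstacle I expect is bookkeeping rather than any deep difficulty: one must check that the normal-form rewriting and the splitting of each equation into its two meet-components are genuine equivalences valid in \emph{every} boolean algebra, so that they transport faithfully between the extension and $A$, and one must justify that the cross-inequalities $u_i \le v_j$ are forced by the mere existence of the simple extension. Once these are secured, lattice completeness supplies the supremum $\sup_i u_i$, which is exactly the feature distinguishing complete boolean algebras, and the argument closes.
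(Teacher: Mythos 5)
Your proposal is correct and follows essentially the same route as the paper's proof: reduce each condition to the normal form $(p\wedge x)\vee(q\wedge x')=(p'\wedge x)\vee(q'\wedge x')$, split it into two meet-equations, convert these via the Lemma into a lower and an upper bound for $x$ lying in $A_{sub}$, observe that every lower bound sits below every upper bound because the simple extension realizes all constraints simultaneously, and take $a=\sup_i u_i$ using lattice completeness. The only cosmetic difference is that you split the equation by meeting both sides with $x$ and $x'$ where the paper invokes the decomposition $A\cong[0,x]\times[0,x']$; these are the same argument.
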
 

\begin{proof} 
As follows from Lemma 4 each set of $x$-extension conditions for arbitrary subalgebra $A$ of complete boolean algebra $A_1$ is unambiguously determined by some sets of lower and upper bounds for $x$ in $A$. We denote them as $Lower_{A}(x)$ and $Upper_{A}(x)$.  

Consider some set of $x$-extension conditions for subalgebra $A$ of complete boolean algebra $A_1$. Because $A_1$ is a complete lattice the tight bounds of $Lower_{A}(x)$ and $Upper_{A}(x)$ exist in $A_1$ and $sup(Lower_{A}(x)) \le inf(Upper_{A}(x))$. Therefore $a = sup(Lower_{A}(x)) \le inf(Upper_{A}(x)), a \in A_1$ and, so, it satisfies the selected set of $x$-extension conditions. 
\\
\end{proof} 

\begin{remark} 
To prove that injectivity of complete (divisible) Abelian group or complete (in the sense of ordering) boolean algebra follows their completeness (in the sense defined above) due to Prop.3 we need to check that arbitrary homomorphism of Abelian groups or boolean algebras with domain $A$ can be extended on any simple extension $A \sqcup a$. It looks obvious for Abelian groups because any subgroup of Abelian group is normal so it remains a kernel of congruence relation under any simple extension. And as for boolean algebras such extension can be constructed with Lemma 4.
\\
\end{remark}

\end{document}